\newtheorem{theo}{Theorem}
\newtheorem{lemm}[theo]{Lemma}
\newtheorem{prop}[theo]{Proposition}
\newtheorem{conj}[theo]{Conjecture}
\theoremstyle{remark}
\newtheorem{rema}[theo]{Remark}
\let\eps\varepsilon
\let\epsilon\varepsilon
\let\phi\varphi
\let\ra\rightarrow
\def\pr{\operatorname{pr}}
\def\Q{\mathbf Q}
\def\R{\mathbf R}
\def\C{\mathbf C}
\def\Ric{\operatorname{Ric}}
\def\norm#1{\mathopen\|#1\mathclose\|}
\def\ddc{\mathop{\mathrm d\mathrm d^{\mathrm c}}}
\def\GL{\operatorname{GL}}
\def\SL{\operatorname{SL}}
\def\SO{\operatorname{SO}}
\def\MAT{\mathrm M}
\def\PH{\mathfrak h}
\begin{document}

\title[On the canonical degrees of curves]{On the canonical degrees of curves in varieties of general type}
\author{Pascal Autissier}
\address{IMB, Universit\'e Bordeaux~1, 
351 cours de la Lib\'eration, F-33405 Talence Cedex}
\email{pascal.autissier@math.u-bordeaux1.fr}

\author{Antoine Chambert-Loir}
\address{Irmar, Universit\'e de Rennes~1, Campus de Beaulieu, F-35042 Rennes Cedex, France}
\email{antoine.chambert-loir@univ-rennes1.fr}

\author{Carlo Gasbarri}
\address{IRMA \\ 
Universit\'e de Strasbourg \\
7, Rue Ren\'e Descartes \\
F-67000 Strasbourg }
\email{gasbarri@math.unistra.fr}
\maketitle

\let\subsection\section

\section*{Introduction}

In this paper, we work in the framework of complex analytic varieties;
without contrary mention, varieties are assumed to be irreducible (and reduced).
If  $C$ is a projective curve, we let
$g_C$ be its \emph{geometric} genus (namely, the genus
of its desingularization) and $\chi(C)=2-2g_C$ its geometric
Euler characteristic; we also write $\deg_C L$
for the degree of a line bundle~$L$ on~$C$.

Given a smooth projective variety of general type~$X$ 
and a integer~$g$, 
a widely believed conjecture predicts that
there exists a strict closed subset~$Z$ of~$X$ and an integer~$d$
such that every curve of genus~$g$ drawn on~$X$ and not contained in~$Z$ 
has degree at most~$d$.
An effective version of this conjecture can be stated in
the following way:

\begin{conj}\label{conj} 
Let $X$ be a smooth projective variety of general type;
let $K_X$ be the canonical line bundle of $X$.
Then there exist real numbers~$A$ and~$B$, and a strict Zariski closed
subset $Z\subset X$ such that for every
curve $C\subset X$ such that $C\not\subset Z$, 
we have
\[ \deg_C(K_X)\leq A(2g_C-2)+B.\]
\end{conj}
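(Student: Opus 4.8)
This is an effective form of the Green--Griffiths--Lang conjecture, so I would not expect an unconditional proof; below is the strategy I would pursue and the precise point at which it is known to stall. The plan is to play jet differentials off against the genus. Let $X_k$ be Demailly's tower of $k$-jets over $X$, with projection $\pi_k\colon X_k\ra X$ and tautological line bundle $\mathcal O_{X_k}(1)$, so that a section of $\mathcal O_{X_k}(m)\otimes\pi_k^*\mathcal O_X(-D)$ is a weight-$m$ jet differential vanishing along $D$. Fix an ample divisor $H$ on $X$ and read $\deg(g)$ as $\deg_C(g^*H)$. Since $X$ is of general type, $K_X$ is big, hence so is $\mathcal O_{X_k}(1)$ for $k\gg 0$ by Demailly's bigness theorem (proved through holomorphic Morse inequalities). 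The first step is therefore to produce, for some integers $k,m,B\ge1$, a nonzero
\[ \omega\in H^0\!\bigl(X_k,\ \mathcal O_{X_k}(m)\otimes\pi_k^*\mathcal O_X(-D)\bigr),\qquad D\sim K_X-BH; \]
this is possible for $m\gg0$ because $D$ is a fixed (big) twist while $\mathcal O_{X_k}(1)$ is big.

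Next I would transfer $\omega$ to the curve. Normalise $C$ and lift $g$ to $g_{[k]}\colon C\ra X_k$ using its jets. If $g(C)$ is not contained in the locus cut out by $\omega$, then $g_{[k]}^*\omega$ is a nonzero section of a line bundle of nonnegative degree, whence
\[ \deg_C\bigl(g^*D\bigr)\ \le\ m\,\deg_C\bigl(g_{[k]}^*\mathcal O_{X_k}(1)\bigr). \]
The geometric engine is then the \emph{tautological inequality} of Vojta and McQuillan, which bounds the tautological degree of a jet lift by the Euler characteristic up to a conductor term controlled by $\deg(g)$:
\[ \deg_C\bigl(g_{[k]}^*\mathcal O_{X_k}(1)\bigr)\ \le\ -\chi(C)+c\,\deg(g) \]
for a constant $c=c(X_k,H)$ (for $k=1$ this is the elementary computation that $g_{[1]}^*\mathcal O(-1)\cong T_C(R)$ with $R$ the ramification, giving even $\le-\chi(C)$). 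Since $\deg_C(g^*D)=\deg_C(g^*K_X)-B\deg(g)$, combining the two displays yields $\deg_C(g^*K_X)\le m(-\chi(C))+(B+mc)\deg(g)$, which is the asserted inequality with $A=m$.

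The real obstacle is the clause \emph{``if $g(C)$ is not contained in the locus cut out by $\omega$''}, and it is exactly why the conjecture is open. The Morse inequalities prove that $\mathcal O_{X_k}(1)$ is big but give no control on the base locus of its sections; a priori the common zero divisor of all the relevant jet differentials could project onto all of $X$, so that the $k$-jet lift of every curve meets it and no proper $Z$ exists. Upgrading Demailly's existence statement to an effective one --- that for $X$ of general type the jet differentials generate modulo a fixed proper closed $Z\subset X$ --- is the genuine difficulty. This can be carried out in two classical cases, where the conjecture is a theorem: for surfaces with $c_1^2>c_2$, via the Bogomolov--Miyaoka analysis of the multifoliation defined by symmetric differentials together with McQuillan's confinement of its leaves; and for subvarieties of abelian varieties, by the work of Faltings, Vojta and Yamanoi. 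I would accordingly expect the present paper either to establish such special classes of varieties, or to reduce the inequality to a more manageable positivity hypothesis on $K_X$.
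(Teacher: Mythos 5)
The statement you were asked about is labelled \emph{Conjecture} in the paper, and the paper gives no proof of it: it remains open, and the authors only (i) prove it with $A=\dim(X)$, $B=0$, $Z=\emptyset$ for compact quotients of bounded symmetric domains of classical type, and (ii) construct families of modular and Shimura curves showing that one cannot take $A<\dim(X)$ in general. So there is no ``paper's own proof'' to compare against, and your write-up, which explicitly declines to claim a proof, is the honest response. Your account of the jet-differential strategy is essentially accurate as a description of the state of the art: Demailly's Morse-inequality theorem does produce, for $X$ of general type and $k,m\gg0$, nonzero jet differentials with values in a fixed negative twist; the vanishing argument on the lifted curve and the tautological inequality $\deg_C(g_{[k]}^*\mathcal O_{X_k}(1))\leq-\chi(C)+c\deg(g)$ are correct; and the genuine obstruction is exactly where you put it, namely that the base locus of these jet differentials is only known to be a proper subvariety of $X_k$, whose projection to $X$ may be all of $X$, so that no admissible exceptional set $Z\subset X$ is produced. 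That is the missing idea, and it is missing from the literature, not just from your argument.

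Two remarks on how this sits relative to what the paper actually does. First, the special case the paper \emph{does} prove (Theorem~\ref{theo.tashiro}) is obtained by an entirely different, curvature-theoretic route: for $X=\Gamma\backslash M$ with $M$ a bounded symmetric domain, the Bergman metric is K\"ahler--Einstein, the Ricci curvature of $f^*\omega_\Gamma$ on a curve is bounded below by the holomorphic sectional curvature $S_M$, and integrating gives $-\chi(C)\geq -S_M\deg_C(g^*K_X)$; Tashiro's case-by-case computation $-1/S_M\leq\dim(M)$ for the classical types then yields $A=\dim(X)$ with no exceptional locus and no $\deg(g)$ term. No jet differentials appear. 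Second, your closing guess about the paper's aim is off in an instructive way: the main thrust of the paper is \emph{negative}, namely that Vojta-type expectations with $A=1+\epsilon$ (or $A=\dim(X)-\epsilon$) fail without a gonality restriction, because Hecke orbits of totally geodesic curves in compact Shimura varieties achieve $\deg_C(g^*K_X)=\dim(X)(-\chi(C))$ while being Zariski dense, so no choice of $Z$ can excise them. Any eventual proof of the conjecture by your method would therefore have to land on a constant $A\geq\dim(X)$, which is worth keeping in mind when optimizing the weight $m$ in your scheme.
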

Combined with the Riemann--Hurwitz formula,
this conjecture would imply that for 
any finite morphism $g\colon C\ra X$ such that $g(C)\not\subset Z$, 
\[ \deg_C(g^*K_X)\leq A(2g_C-2)+B [C:g(C)].\]

The conjecture is obvious when $\dim(X)=1$.
For surfaces, the most important result in its direction  is a now well-known 
theorem by Bogomolov~\cite{bogomolov1977} (and its further developments) 
who proves such a statement 
under the additional assumption that $X$
is a minimal surface with positive Segre class.
Under the supplementary assumption that $X$ is Brody-hyperbolic, 
\emph{i.e.,} that any holomorphic map from~$\C$ to~$X$ is constant,
Demailly established this conjecture
in~\cite{demailly1997} (Theorem~2.1); he shows that one may take
$Z=\emptyset$ and $B=0$.

Of course, an effective geometric upper-bound of the
real numbers~$A$ and~$B$ of the conjecture would be
of the utmost interest.
In his interesting paper~\cite{miyaoka2008}, Miyaoka
establishes a fundamental inequality between the self-intersection
of a projective curve~$C$ drawn on a minimal 
projective surface of general type~$X$,
its canonical degree and numerical invariants of the surface itself.
When one restricts oneself to smooth curves, one has $C^2=2g_C-2-\deg_C(K_X)$,
and Miyaoka can show effectively, for any
positive real number~$\eps$,
the existence of a real number~$B$ (depending on~$\eps$)
such that every \emph{smooth} projective curve~$C$ contained in~$X$ satisfies
\[ \deg_C K_X \leq({\frac{3}{2}}+\epsilon)(2g_C-2)+B. \]
This analysis can be extended to nodal curves with a bounded number
of singular points.

While no method is known to understand the constant~$B$,
many interesting conjectures and
guesses have been made concerning the constant~$A$ and its geometric meaning. 

Exploiting the analogy between number theory, Nevanlinna theory
and algebraic geometry, P. Vojta~\cite{vojta1987} proposed very
deep and ambitious conjectures bounding
the canonical height of algebraic points of bounded degree in fibrations. 
The first version of his
conjectures suggests that for any positive real
number~$\eps$, conjecture~\ref{conj} could hold with $A=\dim(X)+\epsilon$, 
the constant~$B$ and the exceptional locus~$Z$ depending on~$\epsilon$,
provided one restricts oneself to curves of given gonality. 
(Recall that the gonality of a projective
curve is the least degree of a non-constant
morphism to the projective line; obviously, 
horizontal curves of bounded degree in a fibration have bounded gonality.)
A more recent
version of his conjecture, published in~\cite{vojta1998}
replaces~$A=\dim X+\eps$ by the more optimistic~$A=1+\eps$,
again with the same restriction on gonality.

More prudently, McQuillan~\cite{mcquillan2001b} asked whether
the constant~$A$ could be
related to the holomorphic sectional curvature of the variety~$X$.
The holomorphic sectional curvature of hermitian symmetric domains
has been explicitly bounded by Azukawa in~\cite{azukawa1985}, 
see Lemma~\ref{lemm.azukawa} below.
Our first contribution in this paper is the observation that 
this result 
implies a precise and explicit form  of conjecture~\ref{conj}
for compact locally symmetric varieties, with $A=\dim(X)$.
\begin{prop}\label{theo.azukawa}
Let $M$ be a bounded hermitian domain
and let $X$ be a compact quotient of~$M$.
Then, for any projective curve~$C$ 
and any finite morphism $g\colon C\ra X$,
\[ \deg_C(g^*K_X) \leq \dim(X) (2g_C-2). \]
\end{prop}

Conversely, and this was the original motivation of this note,
we also want to indicate, by very elementary
examples, that one cannot go under $A=\dim(X)$ in general,
or, in other words, that Vojta's refined expectation cannot hold without
the above-mentioned restriction on the gonality.
Namely, we shall prove the following  theorem.

\begin{theo}\label{theo}
Let $d$ be an integer such that $d\geq 2$.
There exists a smooth projective
variety~$X$ of dimension~$d$ whose canonical
bundle~$K_X$ is ample and a family $(C_n,f_n)$, where $C_n$
is a smooth projective curve 
and $f_n\colon C_n\ra X$ is a finite morphism satisfying the
following properties:
\begin{enumerate}\def\theenumi{\alph{enumi}}\def\labelenumi{\theenumi)}
\item
$\deg_{C_n}(f_n^*K_X) = d( 2g_{C_n}-2 )$;
\item
$\displaystyle\lim_n g_{C_n}=+\infty$;
\item
the union of their images $f_n(C_n)$ is dense in~$X$
for the Zariski topology.
\end{enumerate}
\end{theo}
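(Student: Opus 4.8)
The plan is to take $X=Y^d$ for a single smooth projective curve $Y$ of genus $g_Y\ge 2$, so that $K_X=\bigotimes_{i=1}^d\pr_i^*K_Y$ is ample, being an external tensor product of ample line bundles, and then to arrange that each curve in the family has all $d$ of its projections to $Y$ unramified. The reason this forces the extremal ratio is Riemann--Hurwitz: writing a finite morphism $g=(h_1,\dots,h_d)\colon C\ra X$ with $h_i\colon C\ra Y$ of degree $\delta_i$, one has
\[ \deg_C(g^*K_X)=\sum_{i=1}^d\delta_i(2g_Y-2)=\sum_{i=1}^d\bigl((-\chi(C))-\deg R_i\bigr)\le d\,(-\chi(C)), \]
where $R_i\ge 0$ is the ramification divisor of $h_i$, with equality exactly when every $h_i$ is étale. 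Since each individual ratio is at most $d$, property~(a) can hold only if equality holds throughout; so the whole problem reduces to producing infinitely many smooth curves $C_n$, each admitting $d$ everywhere-unramified maps to the fixed curve $Y$, with $-\chi(C_n)\ra\infty$ and with Zariski-dense union of images.

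The building block I would use is a nontrivial étale self-correspondence of $Y$. Choosing $Y$ to be a connected \emph{non-Galois} finite étale cover $\pi\colon Y\ra Y_0$ of a curve $Y_0$ of genus $\ge 2$, the fibered product $Y\times_{Y_0}Y$ is smooth (being étale over the smooth curve $Y$), and a connected component $Z$ off the diagonal carries two étale projections $p,q\colon Z\ra Y$ of equal degree $m\ge 2$ (for instance, taking $\pi$ with full symmetric monodromy makes the off-diagonal part irreducible and free of automorphisms). This is a correspondence $\Gamma\subset Y\times Y$ that is not the graph of an automorphism. Iterating $\Gamma$, i.e.\ forming the smooth curves
\[ W_n=\bigl\{(y_0,\dots,y_n)\in Y^{n+1}:(y_{j-1},y_j)\in\Gamma\text{ for }1\le j\le n\bigr\} \]
(again smooth, as iterated fibered products along étale maps), yields $n+1$ étale maps $\rho_0,\dots,\rho_n\colon W_n\ra Y$; selecting $d$ of the nodes defines a finite morphism $f_n\colon C_n\ra Y^d$, with $C_n$ a connected component of $W_n$, whose component maps are all étale. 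By the computation above the ratio in~(a) is then identically $d$, and since the relevant degrees grow like powers of $m$ one gets $-\chi(C_n)\ra\infty$, which is~(b).

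The hard part will be property~(c), the Zariski density of $\bigcup_n f_n(C_n)$ in the $d$-dimensional variety $Y^d$. This genuinely constrains the choice of correspondence: if $\Gamma$ were a union of graphs of automorphisms (as happens when $\pi$ is Galois) the iterates would sweep out only finitely many curves and density would fail, which is exactly why a non-Galois $\pi$ is needed. The mechanism I would exploit is that the relation imposed between two nodes $y_a,y_b$ of $W_n$ at distance $s=|b-a|$ is the $s$-fold composite correspondence $\Gamma^{(s)}$; so to force density it should suffice to choose $\pi$ so that $\Gamma$ admits no proper closed $\Gamma$-invariant subset and to let the chosen nodes be spaced at distances tending to infinity, after which no proper closed subscheme of $Y^d$ can contain all the images $f_n(C_n)$. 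Establishing this spreading property for a suitably, explicitly chosen cover $\pi$ --- equivalently, ruling out invariant subvarieties for the iterated correspondence --- is where the real work lies, and is the step I would expect to require the most care.
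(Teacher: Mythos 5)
Your reduction is correct and is in fact the paper's mechanism: on $X=Y^d$ the ratio $\deg_C(g^*K_X)/(-\chi(C))$ is at most~$d$ by Riemann--Hurwitz applied to each component map, with equality exactly when all $d$ maps $C\ra Y$ are étale, so everything hinges on producing a Zariski-dense family of curves carrying $d$ étale maps to a fixed curve~$Y$ of genus~$\geq 2$. (The paper obtains these from the degeneracy maps between compact Shimura curves $C_{p\ell_1\cdots\ell_{d-1}}\ra C_p$, which are étale precisely because there are no cusps and no elliptic points.) The gap is in your source of such curves: the correspondence $\Gamma\subset Y\times Y$ built from a single cover $\pi\colon Y\ra Y_0$ cannot spread, because the relation $\pi(y)=\pi(y')$ is an equivalence relation and in particular transitive. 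Every point of $W_n$ satisfies $\pi(y_0)=\pi(y_1)=\cdots=\pi(y_n)$, so $W_n$ sits inside the $(n+1)$-fold fibre product of $Y$ over $Y_0$, and every image $f_n(C_n)$ is contained in the \emph{fixed} one-dimensional subvariety $Y\times_{Y_0}\cdots\times_{Y_0}Y\subset Y^d$ ($d$ factors), independently of $n$ and of the chosen nodes; equivalently, $\Gamma^{(s)}$ is supported on $\Gamma\cup\Delta$ for all $s$. Worse, each connected component of $W_n$ is an intermediate cover of the Galois closure of~$\pi$ (its monodromy factors through the stabilizer, in the monodromy group $G\subset S_k$, of a tuple in $\{1,\dots,k\}^{n+1}$), so up to isomorphism there are only finitely many curves $C_n$ and finitely many maps $f_n$ in your whole family: both (b) and (c) fail outright. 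Note also that your own non-degeneracy criterion can never be met by a correspondence of this type, since every fibre $\pi^{-1}(y_0)$ is a finite $\Gamma$-invariant subset of~$Y$.

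To repair the argument you need an étale self-correspondence of~$Y$ with infinite orbits, i.e., one not contained in (the graph of) an equivalence relation with finite classes, and this is exactly what Hecke correspondences provide and why the paper works with modular and Shimura curves. There, density is proved by varying the auxiliary primes $\ell$ and observing that a (false) elliptic curve $E$ without extra endomorphisms has pairwise non-isomorphic quotients $E/K_\ell$, so that a hypothetical divisor containing all the images would have an infinite fibre over a suitable point; some arithmetic (or at least dynamical) input of this kind, guaranteeing that the orbits of the correspondence are infinite and spread out, is the missing ingredient and does not come for free from a general non-Galois cover.
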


We show (Theorem~\ref{theo.shimura}) that 
one may take for $X$ any compact Shimura variety
associated to any $\Q$-anisotropic, semisimple, 
simply connected $\Q$-group~$\mathbf G$,
provided $X$ contains at least one totally geodesic compact Riemann surface~$C$.
The curves $C_n$ are then images of~$C$ under Hecke correspondences.
Using weak approximation at the real place, we show indeed 
that these curves are Zariski dense in~$X$.

We give in a final section an entirely algebraic
approach to this Theorem, using powers of Shimura curves.
Because of the cusps, 
considering modular curves in a power of a classical
modular curves leads to a slightly weaker result.

We view these examples as a nice implementation of McQuillan's principle
that \emph{Shimura varieties are a rich source of examples
for Diophantine geometry.}

\medskip

A first observation to make is that Theorem~\ref{theo} does not contradict 
Miyaoka's result~\cite{miyaoka2008}
mentioned above. Indeed, there is no reason that the curves
$f_n(C_n)$ in~$X$ are smooth. 
However, in our examples, the morphism $f_n$ will be an immersion
in the sense of differential geometry, \emph{i.e.},
its tangent map is injective at each point.

Observe also that Theorem~\ref{theo} does not contradict Vojta's expectations 
either,
since he restricts himself to curves in a fibration
which have bounded degree. The gonality of such curves is obviously
bounded, while, according to a theorem of Abramovich~\cite{abramovich1996},
the gonality of modular curves tends to infinity.

The geometric conjecture~\ref{conj} has an arithmetic
analogue, due to Vojta.
In a similar spirit, we refer to the paper~\cite{levin2011}
for examples that show
that Vojta's conjecture  cannot be extended to algebraic
points.

\bigskip

\noindent\emph{Acknowledgments:}
This work began during an instructional conference
on Diophantine Geometry, organized by two of us in University
of Rennes under the auspices
of Centre Émile Borel and Institut Henri Poincar\'e.
We thank these organizations to have given us the opportunity
to discuss this subject.

We also thank B.~Edixhoven, A.~Levin, M.~McQuillan,
E.~Ullmo and P.~Vojta for their interest 
and their comments.

A.C-L. acknowledges the support of Institut universitaire
de France, as well as that of the National Science
Foundation under agreement No.~DMS-0635607
during his stay at the Institute of Advanced Study (Princeton),
where part of the  work presented here has been done.
The three authors are supported by the project
\emph{Positive} of Agence nationale de la recherche,
under agreement ANR-2010-BLAN-0119-01.

\label{sec:shimura2}

\subsection{Hermitian symmetric spaces; Bergman metric}

We first review well known general
facts of differential geometry in Hermitian symmetric spaces. 
Since different authors use different normalizations, we found
necessary to recall the precise definitions.

Let $M$ be a Hermitian symmetric space, namely
a connected K\"ahler complex manifold
such that each point of~$M$ is an isolated fixed point
of an involutive holomorphic isometry of~$M$. Set $m=\dim(M)$.

Following the notation of~\cite{helgason1978} (chap.~VIII, \S4),
let $A(M)$ be the group of holomorphic isometries of~$M$
and let $G=A_0(M)$ be its identity component; this is
a real connected Lie group.
We fix a point~$o\in M$ and consider its stabilizer~$K$ in~$A_0(M)$;
this is a compact subgroup; as a real manifold, $M$ can be identified
to the homogeneous space~$G/K$.

We shall always assume that $M$ is of noncompact type:
$G$ is semisimple and $K$ is a maximal compact subgroup of~$G$;
then $M$ is simply connected (\cite{helgason1978}, p.~376, Theorem~4.6)
and there exists (\cite{helgason1978}, p.~382, Theorem~7.1) 
a holomorphic diffeomorphism from~$M$
to a bounded symmetric domain~$D_M$ in the complex space~$\C^m$,
``symmetric'' meaning that each point of~$D_M$ is an isolated
fixed point of an involutive holomorphic diffeomorphism
of~$D_M$ onto itself.

Over the tangent bundle $TD_M$ of~$D_M$ there exists a  K\"ahler
metric $\omega$, for which the following property holds: Let
$\norm{\cdot}_{\mathrm KE}$ the induced metric on the canonical
bundle $K_{D_M}=\bigwedge^m\Omega^1_{D_M}$, then the following
formula relates the K\"ahler form $\omega$ and the first Chern
form of $K_{D_M}$:
\begin{equation}\label{eqn.chern-M}
\omega = c_1(K_{D_M},\norm\cdot_{\mathrm KE}).
\end{equation}
Such a metric is called K\"ahler-Einstein and in this case it coincides with the Bergman metric. Its  existence is proved for instance in Proposition~3.6 (p.~371)  of~\cite{helgason1978}. Once we fix such a normalization, 
as we do now, this metric is uniquely determined. 
Any holomorphic diffeomorphism of~$D_M$ is an isometry
with respect to that metric (\cite{helgason1978}, p.~370, Prop.~3.5). 
We assume, as we may, that the K\"ahler structure of~$M$ is 
the pull-back of the fixed K\"ahler-Einstein metric on~$D_M$
by some (hence, any) holomorphic diffeomorphism of~$M$ to~$D_M$.

\subsection{Holomorphic sectional curvature}
Let us recall a definition of the holomorphic sectional curvature.
Let $p\in M$ and let us consider holomorphic maps
$\gamma\colon U\ra M$, where $U$ is the  open unit disk
in~$\C$, such that $\gamma(0)=0$ and $\gamma'(0)\neq 0$.
Let us write $\gamma^* \omega= F_\gamma(z) i \mathrm dz\wedge\mathrm d\bar z$
on~$U$, where $F_\gamma$ is nonnegative, $F_\gamma(0)\neq 0$.
In a neighborhood of~$0$, the form $\gamma ^*\omega$ defines a metric on the canonical bundle~$K_U$ and its curvature form is given by
\[ R_\gamma = \ddc \log(F_\gamma). \]
The Ricci curvature $\Ric_\gamma$ of $\gamma^*\omega$ is
defined by the formula
\[ \Ric_\gamma(z) F_\gamma(z) i \mathrm dz\wedge\mathrm d\bar z
 =  - \ddc \log (F_\gamma)  \]
where $\ddc = \dfrac i{2\pi}\partial\bar\partial$.
This is a function on a neighborhood of~$0$ in~$U$,
and its value at~$z=0$ only depends on the tangent line at~$p$ 
generated by~$\gamma'(0)$. 

The holomorphic sectional curvature $S_M(p)$ at~$p$
is then defined as 
\begin{equation}
S_M(p) = \sup_{\substack{\gamma\colon U\ra M\\ \gamma(0)=p}}
     \Ric_\gamma(0)
\end{equation}

Since the group of holomorphic isometries of~$M$
acts transitively on~$M$, the holomorphic sectional curvature
is actually independent of~$p$; we shall denote it by~$S_M$.
Since $M$ is a Hermitian symmetric space of the non-compact
type (\cite{helgason1978}, Ch.~VIII, Theorem~7.1, p.~382),
we know that $S_M$ is negative (\cite{helgason1978},  Ch.~V,
Theorem~3.1, p.~241). 

Corollary 8.5 (i) in~\cite{azukawa1985} asserts
that the holomorphic sectional curvature satisfies
the following inequality:
\begin{lemm}[Azukawa] \label{lemm.azukawa}
One has
\[ \frac{-1}{S_M} \leq \dim(M).\]
\end{lemm}

\subsection{Compact quotients}
Let $\Gamma$ be a cocompact discrete subgroup of~$G$ which
acts without fixed points on~$M$. Then, $X_\Gamma=\Gamma\backslash M$
is a compact connected complex manifold. The K\"ahler-Einstein structure
descends to a K\"ahler-Einstein metric~$\omega_\Gamma$ on~$X_\Gamma$.
The canonical line bundle~$K_{X_\Gamma}$ is naturally metrized;
in view of Equation~\eqref{eqn.chern-M},
its Chern form satisfies 
\[ \omega_{\Gamma} = c_1(K_{X_\Gamma},\norm\cdot), \]
hence is positive. By Kodaira's theorem, ${X_\Gamma}$
is a projective complex manifold.

\subsection{Compact Riemann surfaces}
Let $Y$ be a smooth compact Riemann surface and let $f\colon Y\ra X_\Gamma$
be a finite morphism.
One has
\begin{equation}
\int_Y f^*(\omega_\Gamma) 
    = \deg_{Y}(f^*K_{X_\Gamma}).
\end{equation}
Moreover, the form $f^*(\omega_\Gamma)$ induces a
hermitian metric on the canonical line bundle~$K_Y(-\Delta_f)$ of~$Y$,
where $\Delta_f$ is the branching divisor of~$f$.
Let $R_Y$ be its curvature,
one has
\[ \int_Y R_Y = \deg(K_Y-\Delta_f)= -\chi(Y)-\deg(\Delta_f). \]
By the definition of the holomorphic sectional curvature,
\[ R_Y\geq -S_M  f^*(\omega_\Gamma). \]
This shows that
\begin{equation}\label{ineq.chi-deg}
-\chi(Y)\geq - \chi(Y) - \deg(\Delta_f)
     \geq -S_M \deg_{Y}(f^*K_{X_\Gamma}).
\end{equation}
Given Lemma~\ref{lemm.azukawa}, this establishes Proposition~\ref{theo.azukawa}.

\subsection{Totally geodesic Riemann surfaces}
Let $(X,\omega)$ be a Kähler complex manifold, let $Y$ be a Riemann  surface
and let $f\colon Y\ra X$ be a holomorphic immersion. We say
that $f$ is totally geodesic  if any geodesic of~$X$ contained
in~$Y$ is a geodesic of~$Y$ with respect to the Kähler form $f^*\omega$.
This is equivalent to the vanishing of the second fundamental form of~$Y$
(\cite{helgason1978}, Theorem~14.5, p.~80)
and implies that for each $p\in Y$,
the Ricci curvature of $f^*\omega$ at~$p$
equals the holomorphic sectional curvature of~$X$ at $f(p)$.
(This follows, for example, from the Gauss equation relating
the curvature tensor of a Riemannian manifold, that of
a submanifold and the second fundamental form,
see~\cite{besse:2008}, Theorem~1.72.)

Let us assume that $X=X_\Gamma=\Gamma\backslash M$.
Any immersion $f\colon Y\ra X$ lifts to a holomorphic
immersion $\tilde f\colon\mathbf D\ra M$, where
$\mathbf D$ is the open unit disk.
The conditions for $f$ and~$\tilde f$ to be totally geodesic 
are equivalent. Moreover, since automorphisms of~$M$ preserve
its Kähler form, we obtain that for any such  automorphism~$g$,
$g\circ\tilde f$ is totally geodesic if and only if $\tilde f$
is.

For a totally geodesic map $f\colon Y\ra X_\Gamma$, where $Y$ 
is a compact Riemann surface,
inequality~\eqref{ineq.chi-deg} becomes an equality:
\begin{equation}\label{eq.chi-deg}
- \chi(Y) = -S_M  \,\deg_{Y}(f^*K_{X_\Gamma})
\qquad \text{($Y$ totally geodesic).}
\end{equation}

\subsection{Constructing one totally geodesic Riemann surface}
\label{ss.quaternions}

It does not seem easy to construct totally geodesic compact
Riemann surfaces
in arbitrary compact quotients of symmetric hermitian domains.
However, let us show some examples, 
coming from the theory of Shimura varieties.

We restrict the situation to the case where the
Lie group~$G$ is given by $G=\mathbf G(\R)$,
for some connected semisimple and simply connected $\Q$-\emph{anisotropic}
linear algebraic group~$\mathbf G$ over~$\mathbf Q$.
Then, for any congruence subgroup $\Gamma$ 
which acts without fixed point on~$M=G/K$,
the quotient space $X_\Gamma=\Gamma\backslash M$ is a smooth projective variety.

Let $D_1$ be a quaternion algebra over~$\mathbf Q$
and let $\mathbf G_1=\SL(D_1)$ be the corresponding
almost simple $\Q$-group. We assume that $D_1$ is not split,
\emph{i.e.}, is not isomorphic to~$\MAT_2(\mathbf Q)$
but that $D_1\otimes_{\Q} \R$ is split,
so that $G_1=\mathbf G_1(\R)\simeq\SL_2(\R)$.
Let $K_1\simeq \SO_2(\R)$ be a maximal compact subgroup in~$G_1$, let
$M_1=G_1/K_1$; then $M_1$ is a hermitian symmetric domain,
and is isomorphic to the Poincar\'e upper half-plane.
Finally, let $\Gamma_1$ be a congruence subgroup in~$G_1(\Q)$,
small enough so as to acts freely on~$M_1$.
Then $X_1=\Gamma_1\backslash M_1$ is a smooth curve.

Let $F$ be a totally real field such that
the simple $F$-algebra $D=D_1\otimes_{\mathbf Q} F$ is not split.
Let $\mathbf G=\SL(D)$ be the corresponding almost simple~$F$-group;
by Weil's restriction of scalars, we view it
as a semisimple~$\mathbf Q$-group.
One has $G=\mathbf G(\R)\simeq \SL(2,\R)^d$, where $d=[F:\Q]$.
Similarly, let $K$ be a maximal compact subgroup of~$G$,
let $M=G/K$ and let $\Gamma$ be a congruence subgroup
of~$\mathbf G(\Q)$ acting freely on~$M$.
Then, $X=\Gamma\backslash M$  is a smooth projective variety.

Moreover, the algebraic group~$\mathbf G_1$ embeds naturally in~$\mathbf G$;
over~$\R$, this is the diagonal embedding $\SL(2,\R)\hookrightarrow
\SL(2,\R)^d$. Let us assume that $K$ contains~$K_1$
and that $\Gamma$ contains~$\Gamma_1$.
Then, the immersion~$\mathbf G_1\hookrightarrow \mathbf G$
induces an immersion $X_1\hookrightarrow X$ whose image
is a totally geodesic Riemann surface.
(This follows, from example, from~\cite{helgason1978},
Theorem~7.2, p.~224. See also~\cite{moonen:1998}, §4.1.)

Let us finally remark that in this example,
$M$ is a power of the Poincar\'e disk, hence
its holomorphic sectional curvature is equal to $-1/\dim(M)$.

\subsection{Hecke operators}
Any $g\in G(\mathbf Q)$
gives rise to a \emph{Hecke correspondence}~$T_g$ on~$X_\Gamma$.
Let us recall its definition.
Let $\Gamma_g=\Gamma\cap g^{-1}\Gamma g$; this
is a congruence subgroup of finite index in~$\Gamma$.
The variety $\Gamma_g\backslash M$ is smooth
and admits two \'etale maps to~$X_\Gamma$, given respectively by
\[ p_1(\Gamma_g x)=\Gamma x, \qquad p_2(\Gamma_g x)=\Gamma g x, \]
for any $x\in M$, hence a correspondence on~$X_\Gamma$.

Let us describe explicitly the image of a subvariety~$Y\subset X_\Gamma$
by this correspondence.
Let $(h_1,\dots,h_r)$ be a family of elements of~$\Gamma$
which represent each left-class modulo~$\Gamma_g$.
Let $\tilde Y\subset M$ be the preimage of~$Y$; then,
\[ T_g([Y]) = \sum_{i=1}^r [p(g h_i \tilde Y)], \]
where $p\colon M\ra X_G$ is the natural projection.

In view of this description, we observe that if $Y$ is the totally geodesic
image in~$X_\Gamma$ of a Riemann surface,
then so are all the irreducible components of $T_g([Y])$.

\subsection{Weak approximation}
Since the group~$G$ is connected,
$\mathbf G(\Q)$ is dense in~$G=\mathbf G(\R)$
(\cite{platonov-rapinchuk:1994}, Theorem~7.7, p.~415).
Let $\tilde Y\subset G$ be the preimage of~$Y$ by the projection
map $G\ra X_\Gamma=\Gamma\backslash G/K$.
The union of the curves $a\tilde Y$, for $a\in G(\Q)$, is therefore
dense in~$G$. It follows that the union of the curves $T_a Y$
is dense in~$X_\Gamma$.

Let $Y_0$ be any fixed totally geodesic Riemann surface in~$X_\Gamma$.
We conclude that the irreducible components
of the curves $T_{a}Y_0$, for $a\in G(\Q)$, are dense in~$X_\Gamma$.
Since these components are also totally geodesic, we obtain 
the following result.

\begin{theo}\label{theo.shimura}
Let $X_\Gamma$ be a compact Shimura  variety associated
to a semisimple, connected and simply connected
$\Q$-anisotropic algebraic group over~$\Q$; let us assume that
$X_\Gamma$ contains a totally geodesic Riemann surface.

Then, the set of projective (algebraic) curves~$Y$ in~$X_\Gamma$ such that
\[ \deg _{K_{X_\Gamma}}(Y) = \left( \frac{-1}{S_{X_\Gamma}}\right)
   \left( 2g_Y-2  \right) \]
is dense in~$X_\Gamma$ for the complex topology;
in particular, it is Zariski dense.
\end{theo}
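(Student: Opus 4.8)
The plan is to assemble the machinery already developed in the preceding subsections into a density argument built on Hecke correspondences and weak approximation. The key geometric input is the totally-geodesic hypothesis: by the discussion in \S\ref{sec:shimura2}, if $Y$ is a totally geodesic compact Riemann surface in $X_\Gamma$, then equality~\eqref{eq.chi-deg} holds, so $Y$ lies in the set we wish to prove dense. Moreover, the observation at the end of the Hecke-operator subsection shows that totally geodesic curves are stable under every Hecke correspondence $T_g$: each irreducible component of $T_g([Y])$ is again totally geodesic, hence again satisfies the equality. So it suffices to produce, starting from one fixed totally geodesic curve $Y_0$, a family of its Hecke translates whose union is Zariski (indeed complex-topologically) dense in $X_\Gamma$.

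First I would fix a totally geodesic Riemann surface $Y_0\subset X_\Gamma$, which exists by hypothesis, and pass to the preimage $\tilde Y_0\subset G$ under the projection $G\to\Gamma\backslash G/K=X_\Gamma$. The aim is to show the union $\bigcup_{a\in\mathbf G(\Q)} a\,\tilde Y_0$ is dense in $G$ for the real topology. This is exactly where weak approximation enters: since $\mathbf G$ is connected and simply connected, strong/weak approximation (\cite{platonov-rapinchuk:1994}, Theorem~7.7, p.~415) gives that $\mathbf G(\Q)$ is dense in $G=\mathbf G(\R)$. Because $\tilde Y_0$ is a nonempty subset of $G$ with nonempty interior in the relevant sense (it is the full preimage of a curve, so it is a $K\cdot\Gamma$-saturated set meeting an open neighborhood), left-translating it by the dense set $\mathbf G(\Q)$ sweeps out a dense subset of $G$. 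Projecting back down, the union of the images of the $a\tilde Y_0$ is dense in $X_\Gamma$ for the complex topology.

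Next I would translate this analytic density back into the language of Hecke correspondences. For $a\in\mathbf G(\Q)$ the projected image of $a\tilde Y_0$ is precisely (a union of irreducible components of) the Hecke translate $T_a Y_0$, by the explicit formula $T_g([Y])=\sum_i [p(g h_i\tilde Y)]$ recalled above. Hence density of $\bigcup_a a\tilde Y_0$ in $G$ yields density of $\bigcup_a T_a Y_0$ in $X_\Gamma$ for the complex topology. Each such $T_a Y_0$ is a finite union of totally geodesic Riemann surfaces, and every one of them satisfies the curvature equality~\eqref{eq.chi-deg}, i.e. $\deg_{K_{X_\Gamma}}(Y)=(-1/S_{X_\Gamma})(-\chi(Y))$. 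Therefore the set of curves realizing this equality contains a complex-topology-dense family, so it is dense for the complex topology; since Zariski-closed sets are closed in the complex topology and a complex-dense set meeting every nonempty open set cannot lie in a proper Zariski-closed subset, it is a fortiori Zariski dense. This completes the deduction.

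The main obstacle, and the step requiring the most care, is the weak-approximation argument converting density of $\mathbf G(\Q)$ in $G$ into density of the translated family $\bigcup_a a\tilde Y_0$ in $G$. One must verify that left-multiplication by a dense subgroup of the topological group $G$ does spread out the fixed set $\tilde Y_0$ to a dense subset; this is clear once one knows $\tilde Y_0$ has nonempty interior or, more precisely, that for any open $U\subset G$ one can find $a\in\mathbf G(\Q)$ with $aU\cap\tilde Y_0\neq\emptyset$, which follows from density of $\mathbf G(\Q)$ together with the fact that $\tilde Y_0$ is a nonempty $\Gamma$-invariant, $K$-stable subset. The remaining ingredients — the curvature equality for totally geodesic curves, the Hecke-stability of the totally geodesic condition, and the passage from complex density to Zariski density — are already established or immediate, so the theorem follows by combining them.
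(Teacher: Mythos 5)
Your proposal follows the paper's own argument essentially verbatim: fix one totally geodesic $Y_0$, note that equality~\eqref{eq.chi-deg} holds for it and for every irreducible component of every Hecke translate $T_aY_0$, and use the density of $\mathbf G(\Q)$ in $G=\mathbf G(\R)$ to make the union of the translated preimages $a\tilde Y_0$ dense in~$G$, hence the union of the $T_aY_0$ dense in~$X_\Gamma$. The only blemish is your aside that $\tilde Y_0$ has ``nonempty interior'' (it does not when $\dim M>1$); what is actually used, and what you correctly state afterwards, is just that for any nonempty open $U\subset G$ and any $y\in\tilde Y_0$ the open set $Uy^{-1}$ meets $\mathbf G(\Q)$.
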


\begin{rema}
When it applies, a theorem
of Clozel, Oh and Ullmo~\cite{clozel-oh-ullmo2001}
refines the density of Hecke orbits into an equidistribution theorem.
\end{rema}

\subsection{Shimura curves}

In this section we give another, more explicit, approach to
Theorem~\ref{theo}. It is obtained by considering products of Shimura
curves and Hecke correspondences over them. Strictly speaking this
section can be viewed as a particular case of the previous one 
but the present approach is entirely algebraic and relies
on  the modular interpretation of the Shimura curves. 
A similar analysis can be  applied 
to the graph of Hecke correspondences on modular curves;
because of the cusps,
this leads to a slightly weaker form of Theorem~\ref{theo}. 

Our main references concerning Shimura curves and their
moduli interpretation
are~\cite{miyake2006,buzzard1997};
see also~\cite{vigneras80,boutot-carayol1991}.

Let $D$ be an indefinite quaternion algebra over~$\Q$
of reduced discriminant~$\delta>1$, viewed as
a subalgebra of~$\MAT_2(\R)$; let $R$ be a fixed
maximal order in~$D$ (they are all conjugate)
and let $\Gamma_R$ be the subgroup $R^*\cap\SL_2(\R)$ of~$\GL_2(\R)$.
Elements of~$\Gamma_R$ act by homographies on the Poincar\'e upper-half-plane~$\PH$.
The Shimura curve~$C$ associated to~$D$ is the quotient of~$\PH$
by~$\Gamma_R$. 
It is a compact algebraic complex curve
whose points are in one-to-one correspondence with
the set of (isomorphism classes of)  complex abelian surfaces 
endowed with an action of~$R$, also called \emph{false elliptic
curves}. 
These abelian surfaces carry a canonical  principal polarization
once an element of square~$-\delta$ is fixed in~$R$. 

See also~\cite[p.~594]{buzzard1997}
for an adelic description of this Shimura curve, 
in the spirit of the general theory
of Shimura varieties, as well as for its coverings induced by level-structures
on these abelian surfaces.
In particular,
for any square free positive integer~$N$ which is prime to~$\delta$,
there exists a projective curve~$C_N$ parameterizing
isogenies of \emph{false degree}~$N$
between \emph{false elliptic curves}, that is isogenies $f\colon A\ra A'$
commuting with the action of~$R$ such that the composition $f^\vee\circ f$
of~$f$ with its dual isogeny~$f^\vee$ is the multiplication by~$N$.
There are natural maps $s_M^N\colon C_N\ra C_M$ 
and $t_M^N\colon C_N\ra C_M$ between these curves,
whenever $M$ divides~$N$. 
If $N\geq 4$, this curve is a \emph{fine} moduli space;
equivalently, $C_N(\C)$ is the quotient of the upper-half-plane
by a discrete, cocompact, and torsion free 
subgroup of $\SL_2(\R)$.
Since there are no cusps at infinity,
the complex description of these curves,
or their moduli interpretation,
shows that these maps are finite and \emph{\'etale}.

Let $p>4$ be a fixed prime number, let $d\geq 2$ be a positive
integer and let $X$ be the smooth variety~$C_p^d$.
Let $N=p\ell_1\dots\ell_{d-1}$ be the product of~$d$ distinct prime numbers
not dividing~$\delta$, among which the prime~$p$.
The curve $C_N$ admits $d$ distinct morphisms $g_1,\dots,g_d$
to the curve~$C_p$, given by
$g_d=s_p^N$ and $g_i=t_p^{p\ell_i }\circ s_{p\ell_i}^N$ for $1\leq i\leq d-1$.
Let $f_N\colon C_N\ra X$ be the morphism $(g_1,\dots,g_d)$.

Our claim is that the variety~$X$ and the curves
$(C_N,f_N)$ we obtain satisfy the statement of Theorem~\ref{theo}.
Since one has
\[ \deg f_N^* K_X=-d\chi(C_N), \]
for any integer~$N$,
this follows from the following density property.

\begin{lemm}\label{lemm.dense}
Let $p$ and~$d$ be fixed.
Then the union of the curves $f_N(C_N)$
is dense for the Zariski topology, where $N$ varies
among the set of integers of the form $N=\ell_1\dots\ell_{d-1}p$ as above.
\end{lemm}
\begin{proof}
The proof is by induction on~$d$. When $d=1$,
it clearly holds since $X=C_p$ and $f_p\colon C_p\ra X$
is the identity map.
Let us assume that the result holds for~$d-1$ and let
us prove it for~$d$. Assume by contradiction
that there exists a divisor~$H$ in~$X$
containing the images of all curves~$C_N$ above.

Let $X'=C_p^{d-1}$
and 
let  $q\colon X\ra X'$,
$(x_1,\dots,x_d)\mapsto (x_2,\dots,x_d)$,
 be the projection
obtained by forgetting the first component.
For any integer~$N'$ of
the form $p\ell_2\dots\ell_{d-1}$ which is the
product of distinct prime numbers,
let $f'_{N'}\colon C_{N'}\ra X'$
be the corresponding morphism. 
If $\ell_1$ is any prime number not dividing~$N'$ and $N=\ell_1N'$,
there is a commutative diagram
\[ \xymatrix{
    C_N   \ar[r]^{f_N} \ar[d]^{s_{N'}^N} & X \ar[d]^q \\
  C_{N'} \ar[r]^{f'_{N'}} & X' .}  \]
By hypothesis, all curves $f'_{N'}(C_{N'})$
are contained in~$q(H)$, hence $q(H)=X'$ by induction.
Since $\dim H=\dim X'$
and the restriction~$q_H$ of~$q$ to~$H$ is surjective, there
exists a point $y'\in C_{N'}$ such that $q_H$ is finite 
above $x'=f'_{N'}(y')$. 
This point~$y'$ corresponds to an isogeny $u\colon A\ra A'$
between abelian surfaces endowed with an action of~$R$, 
commuting with the action on~$R$,
such that $u^\vee\circ u$ is the multiplication by~$N'$.

For any prime number~$\ell$ which does not divide~$N'$
and which is totally decomposed in~$R$, there exists
a cyclic subgroup $K_\ell$ of order~$\ell$ in~$A$ which
is invariant by~$R$.
The subgroup $K_\ell+\ker(u)$ of~$A$ is then cyclic of order~$N=\ell N'$,
and $R$-invariant, so that
the point $f_N(A\ra A/(K_\ell+\ker(u)))$ of~$X$ is a point of~$H$
mapping to~$f'_{N'}(A\ra A')$ by~$q$. 
Then,
$\pr_1(f_N(A\ra A/(K_\ell+\ker(u))))=
g_1(A \ra A/(K_\ell+\ker(u)))=(A/K_\ell\ra A/(K_\ell+\ker(u)))$.

Moreveor, for any given prime number~$\ell$ which does not
divide~$\delta$, there are infinitely many prime
numbers~$\ell'$ such that 
our false elliptic curves have no isogeny of false degree~$\ell\ell'$,
for, otherwise, almost all prime numbers would be reduced norms in~$D$.
Consequently, the fibre of~$q_H$ above $f'_{N'}(A\ra A')$ is infinite, 
contradiction.
\end{proof}

\providecommand{\noopsort}[1]{}\providecommand{\url}[1]{\textit{#1}}
\providecommand{\bysame}{\leavevmode ---\ }
\providecommand{\og}{``}
\providecommand{\fg}{''}
\providecommand{\smfandname}{\&}
\providecommand{\smfedsname}{\'eds.}
\providecommand{\smfedname}{\'ed.}
\providecommand{\smfmastersthesisname}{M\'emoire}
\providecommand{\smfphdthesisname}{Th\`ese}

\end{document}